\def\ep{\epsilon}
\def\de{\delta}
\def\lam{\lambda}
\def\la{\langle}
\def\ra{\rangle}
\def\nbb{(\mathbf \beta_1,\cdots,\beta_n)}
\def\naa{(\mathbf \alpha_1,\cdots,\alpha_n)}
\def\naaa{\mathbf\{\alpha_1,\cdots,\alpha_n\}}
\def\nee{(\mathbf\ep_1,\cdots,\ep_n)}
\def\eb{{\mathbf e}}
\def\na{(a_1,\cdots,a_n)}
\def\nep{(\ep_1,\cdots,\ep_n)}
\def\ol{\overline}
\def\inn{i=1,\cdots,n}
\def\mmaa{(\la{\mathbf\alpha_i,\alpha_j}\ra)}
\def\inn{i=1,\cdots,n}
\def\aa{\mathcal{A}}
\newtheorem{lemma}{Lemma}[section]
\newtheorem{theorem}[lemma]{Theorem}
\newtheorem{corollary}[lemma]{Corollary}
\newtheorem{remark}[lemma]{Remark}
\newtheorem{example}[lemma]{Example}
\begin{document}
\date{}
\title{ Least Squares Problems in  Orthornormalization   }
\author{Shanwen Hu\thanks{E-mail: swhu@math.ecnu.edu.cn}\\
  Research Center for Operator Algebras\\
 East China Normal University, Shanghai 200241, P.R. China
}

\maketitle

\begin{abstract} \noindent
For any $n$-tuple $\naa$ of linearly independent vectors in Hilbert
space $H$, we construct a unique orthonormal basis $\nee$ of
$span\{\alpha_1,\cdots,\alpha_n\}$ satisfying:
$$\sum_{i=1}^n\|\ep_i-\alpha_i\|^2\le\sum_{i=1}^n\|\beta_i-\alpha_i\|^2$$
 for all orthonormal basis $\nbb$ of $span\{\alpha_1,\cdots,\alpha_n\}$.
 We study the stability of the orthornormalization and give some
applications and examples.

\vspace{3mm}

 \noindent{\it Key words}: orthornormalization, Gram-Schmidt
 orthogonalization, least square

\vspace{3mm} \noindent{2010 {\it Mathematics Subject
Classification\/}: 46C05,15A60,65J05}
\end{abstract}

\section{Notations and Introduction}
Throughout this paper,
\begin{enumerate}

\item $\mathbb C$ (or $\mathbb R$) is the complex (or real) number
field.

\item  For any $z\in \mathbb C$, $\ol{z}$ is the complex conjugate of $z$.
$Re(z)$ is the real part of $z$.

\item  $M_{n,\; m}(\mathbb C)=\{(a_{ij})\}$ is the set of  $n\times m$ complex
matrices. $M_n(\mathbb C)=M_{n,\;n}(\mathbb C)$.

\item $\mathbb C^n=M_{n,1}(\mathbb C)$. The identity of $\mathbb
C^n$ denoted by $I_n$.

\item  For any $(a_{ij})\in M_{n,\;m}(\mathbb C)$, $\ol{(a_{ij})}=(\ol{a}_{ij})\in M_{n,\; m}(\mathbb C)$.

\item The
standard basis  of $\mathbb C^n$ is denoted  by ${\mathbf
e_i}=(\de_{1i},\cdots,\de_{in}),\inn$.

\item For any $(a_{ij})\in M_n(\mathbb C)$.  $tr(a_{ij})=\sum_{i=1}^na_{ii}$. $\sigma\big((a_{ij})\big)$
is the set of spectrum of $(a_{ij})$.
$\|(a_{ij})\|=\sup\{\|(a_{ij})x^T\|:x\in \mathbb C^n,\|x\|=1\}$.

\item  $H$ is a  complex  Hilbert space with the inner product
$\langle\cdot,\cdot\rangle$.

\item  $H^n=\{\naa:\alpha_i\in H,i=1,\cdots,n\}$ is a Hilbert space,
the inner product defined by:
 $\la\naa,\nbb\ra=\sum_{i=1}^n\la\alpha_i,\beta_i\ra$.

\item For any $(a_{ij})\in M_{n,\; m}(\mathbb C),(a_{ij})^T=(a_{ji})\in M_{m,\;n}(\mathbb C)$ and for any $\naa\in H^n$,
$$\naa^T=\begin{pmatrix}
 \alpha_1\\\vdots\\\alpha_n
\end{pmatrix}.$$

\end{enumerate}

 If $\{\alpha_1,\cdots,\alpha_n\}\subset H^n$ is subset of linearly independent
 vectors in $H$.
The orthonormalization of $\{\alpha_1,\cdots,\alpha_n\}$ is to find
a solution $\{\beta_1,\cdots,\beta_n\}$ in
$span\{\alpha_1,\cdots,\alpha_n\}$ to the system:  for any $i,j$,

$$\big\la \beta_i,\beta_j\big\ra=\de_{ij}=\left\{\begin{array}{c}
1,i=j\\0,i\not=j\end{array}\right.,\quad i,j=1,\cdots,n.$$

The orthonormalization can be carried out in infinitely ways. The
most simple and practical tool is the Gram-Schmidt process. It is a
recursive process and are used widely in various fields.  However in
general, the Gram-Schmidt process can not provide a simple
construction formula for the solution and can not give a   method to
calculate the sum of squares
$$
\|(\beta_1,\cdots,\beta_n)-\naa\|^2=
\sum_{i=1}^n\|\beta_i-\alpha_i\|^2.
$$
Gram-Schmidt process is also unstable due to the repeated various
operations. These restrict its applications, especially in the
abstract or  theoretical analysis.

In numerical linear algebra,  Householder method is also used in the
orthornormalization.   The Gram-Schmidt process produces the jth
orthogonalized vector after the jth iteration, while Householder
method produces all vectors only at the end. And theoretically
Household method take twice operations as Gram-Schmidt process, but
it uses orthogonal transformation at each iteration, so it is
stable. The Household method is restricted in numerical linear
algebra only.

The  Gram-Schmidt process and Householder method  can be find in
Linear Algebra and Matrix Analysis text books, for example, see [1]
or [2].

In Section 2, we provide a  simple and uniform formula $K\naa$ (see
(2.4)) for any Hilbert space, on complex field or  real field, with
finite or infinite dimensional, in numerical form or not in
numerical form, no iteration,
  to   construct an
orthonormal basis of  $span\{\alpha_1,\cdots,\alpha_n\}$, only
according the direct information
$\{\la\alpha_i,\alpha_j\ra:i,j=1,\cdots,n\}$, satisfying:
$$
\sum_{i=1}^n\|\ep_i-\alpha_i\|^2=n+\sum^n_{i=1}\|\alpha_i\|^2-2tr
\big((\la \alpha_i,\alpha_j\ra)^{1/2}\big).
$$
 Moreover, we  show $n+\sum_{i=1}^n\|\alpha_i\|^2-2tr\big((\la
\alpha_i,\alpha_j\ra)^{1/2}\big)$ is the lower bound of all  sum
squares of  orthonormal basis of $span\naaa$, and $\nee$ is the
unique one minimized the sum of squares.

In Section 3, we show  our construction is stable in the sense: for
any given $\ep>0$ and any linearly independent $n$-tuple $\naa$ in
$H^n$, there exists $\de$, dependent only on $\naa$ and $\ep$, such
that for any $\nbb\in H^n$, $\max\{\|\alpha_i-\beta_i\|\}<\de$
implies $\|K\naa-K\nbb\|<\ep$.

In section 4, as an application, we establish a formula for the
distance of between any $\gamma\in H$ and  $span\naaa$, generalize
the one in the case $\naaa$ is an orthonormal basis. For any
$\ep$-mutually orthonormal basis $\naa$,(see (4.4) for the
definition), we show there is  an orthonormal basis $\nee$, the
distance between them in $H^n$  not exceed $\sqrt{2(n-1)\ep}$.

\section{ Least Square in  Orthornormalization}

(1) For any $\naa\in H^n$, $\na\in \mathbb C^n$, if
$\eta=\sum_{i=1}^na_i\alpha_i$,  we write
$$
\eta=\na
\naa^T=\naa\begin{pmatrix}\alpha_1\\\vdots\\\alpha_n\end{pmatrix}.
$$
(2) If for any
 $i$,  $\beta_i=(a_{i1},\cdots,a_{in})\naa^T,i=1,\cdots,n$, then we
write
$$
\begin{pmatrix}
 \beta_1\\\vdots\\\beta_n
\end{pmatrix}
=\left(\begin{array}{ccc}
 a_{11}&\cdots&a_{1n}\\
 \vdots&&\vdots\\
 a_{n1}&\cdots&a_{nn}
\end{array}\right)\begin{pmatrix}
 \alpha_1\\\vdots\\\alpha_n
\end{pmatrix}.\eqno(2.1)
$$
That is
$$\beta_i={\mathbf e}_i(a_{ij})\naa^T,i=1,\cdots,n.$$
It is easy to check:  if $\eta=\sum_i^na_i\alpha_i$ and
$\zeta=\sum_{j=1}^mb_j\beta_j$, then
$$\la\eta,\zeta\ra
=\sum_{j=1}^m\sum_{i=1}^na_i\ol{b}_j\la\alpha_j,\beta_i\ra
=(\ol{b}_1,\cdots,\ol{b}_m)(\la\alpha_j,\beta_i\ra)\na^T.\eqno(2.2)$$

\begin{theorem} Suppose $\naa$ is $n$-tuple of linearly  independent
vectors in a complex  Hilbert space, then

(1) For any orthonormal base $(\beta_1,\cdots,\beta_n)$ of
$span\{\alpha_1,\cdots,\alpha_n\}$,  there exists an invertible
$(a_{ij})\in M_n(\mathbb C)$ such that (2.1) holds and satisfying,

$$\sum_{i=1}^n\|\beta_i-\alpha_i\|^2=
n+\sum_{i=1}^n\|\alpha_i\|^2-2Re\Big(tr\big((\la\alpha_j,\alpha_i\ra)(a_{ji})\big)\Big).
\eqno (2.3)
$$

 (2) $(\la\alpha_j,\alpha_i\ra)$ is positive definite and if   $K\naa=\nee$   defined by
$$\begin{pmatrix}
 \ep_1\\\vdots\\\ep_n
\end{pmatrix}
=\left(\begin{array}{ccc}
 \la\alpha_1,\alpha_1\ra&\cdots&\la\alpha_1,\alpha_n\ra\\
 \vdots&&\vdots\\
 \la\alpha_n,\alpha_1\ra&\cdots&\la\alpha_n,\alpha_n\ra
\end{array}\right)^{-1/2}
\begin{pmatrix}
 \alpha_1\\\vdots\\\alpha_n
\end{pmatrix},\eqno(2.4)
$$
then

(a)  $\nep$ is a orthonormal basis  of
$span\{\alpha_1,\cdots,\alpha_n\}$.

(b)

$$\sum_{i=1}^n\|\ep_i-\alpha_i\|^2=n+\sum_{i=1}^n\|\alpha_i\|^2-2tr\Big(\big(\la\alpha_j,\alpha_i\ra\big)^{1/2}\Big).\eqno(2.5)$$

(c) For any orthonormal base $(\beta_1,\cdots,\beta_n)$ of
$span\{\alpha_1,\cdots,\alpha_n\}$ ,
$$\sum_{i=1}^n\|\ep_i-\alpha_i\|^2\le\sum_{i=1}^n\|\beta_i-\alpha_i\|^2, $$
and the equality holds if and only if
$(\beta_1,\cdots,\beta_n)=\nep$.
\end{theorem}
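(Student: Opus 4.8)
The plan is to translate everything into the Gram matrix $Q=(\la\alpha_i,\alpha_j\ra)$ and reduce the minimization in (c) to a single sharp trace inequality for unitary matrices. First I would fix notation: any orthonormal base is written $\nbb^T=(a_{ij})\naa^T$ as in (2.1), and since $\naaa$ and $\{\beta_1,\dots,\beta_n\}$ are two bases of the same $n$-dimensional space, the matrix $A=(a_{ij})$ is automatically invertible, which settles the existence claim in part (1). Writing $A^{*}=\ol{A}^{T}$ for the conjugate transpose and expanding $\la\beta_i,\beta_k\ra$ via (2.2) gives $\la\beta_i,\beta_k\ra=(AQA^{*})_{ik}$, so orthonormality of $\nbb$ is exactly the constraint $AQA^{*}=I_n$. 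Similarly $\sum_i\la\beta_i,\alpha_i\ra=\sum_{i,j}a_{ij}\la\alpha_j,\alpha_i\ra=tr(AQ)$, and since $Q$ is Hermitian this equals $tr\big((\la\alpha_j,\alpha_i\ra)(a_{ji})\big)$. Substituting $\|\beta_i\|^2=1$ into $\sum_i\|\beta_i-\alpha_i\|^2=\sum_i(\|\beta_i\|^2-2Re\la\beta_i,\alpha_i\ra+\|\alpha_i\|^2)$ then yields (2.3).

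The positive definiteness of $Q$ in part (2) is the standard Gram-matrix fact: its associated quadratic form equals $\|\sum_i x_i\alpha_i\|^2$, which is positive for $x\neq0$ by linear independence; hence $Q^{1/2}$ and $Q^{-1/2}$ exist as positive definite Hermitian matrices. Taking $A=Q^{-1/2}$ (so $A^{*}=A$) gives $AQA^{*}=Q^{-1/2}QQ^{-1/2}=I_n$, which proves (a), and $tr(AQ)=tr(Q^{-1/2}Q)=tr(Q^{1/2})$ is real, so (2.3) specializes to (2.5), proving (b). For (c) I would recast (2.3) so that minimizing $\sum_i\|\beta_i-\alpha_i\|^2$ over orthonormal bases is the same as maximizing $Re\,tr(AQ)$ over all $A$ with $AQA^{*}=I_n$. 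The crucial substitution is $U:=AQ^{1/2}$: then $AQA^{*}=(AQ^{1/2})(AQ^{1/2})^{*}=UU^{*}$, so the constraint becomes $UU^{*}=I_n$, i.e. $U$ unitary, while $AQ=UQ^{1/2}$ gives $tr(AQ)=tr\big(UQ^{1/2}\big)$; the choice $A=Q^{-1/2}$ corresponds exactly to $U=I_n$. Thus (c) is equivalent to the inequality $Re\,tr(US)\le tr(S)$ for every unitary $U$ and $S:=Q^{1/2}>0$, with equality only at $U=I_n$.

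This last inequality is the heart of the matter and the place where the equality case must be handled carefully. I would diagonalize $S=VDV^{*}$ with $V$ unitary and $D=\mathrm{diag}(\lam_1,\dots,\lam_n)$, $\lam_k>0$. Setting $W=V^{*}UV$ (again unitary) gives $tr(US)=tr(WD)=\sum_k\lam_k W_{kk}$. Since the columns of a unitary matrix are unit vectors, $|W_{kk}|\le1$, whence $Re\,tr(US)=\sum_k\lam_k Re(W_{kk})\le\sum_k\lam_k=tr(S)$. Because each $\lam_k>0$, equality forces $Re(W_{kk})=1$ for all $k$; together with $|W_{kk}|\le1$ this gives $W_{kk}=1$, and then the row relation $\sum_j|W_{kj}|^2=1$ forces every off-diagonal entry of $W$ to vanish, so $W=I_n$ and hence $U=I_n$, i.e. $\nbb=\nee$. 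Tracing the substitutions backward establishes both the inequality in (c) and the uniqueness of the minimizer, completing the proof.
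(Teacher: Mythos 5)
Your proposal is correct and follows essentially the same route as the paper: reduce everything to the Gram matrix, take $A=Q^{-1/2}$ for parts (a)--(b), and reduce (c) to the inequality $Re\,tr(US)\le tr(S)$ for $U$ unitary and $S=Q^{1/2}$ positive definite, with equality only at $U=I_n$. The only difference is that you write out the diagonalization/$|W_{kk}|\le 1$ argument for that final trace inequality, which the paper states but omits as "easy."
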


\begin{proof}
(1) There is no problem for the existence and the invertibility of
$(a_{ij})$. Noticing $\beta_i=\eb_i(a_{ij})\naa^T$ and
$\alpha_i=\eb_i\naa^T$, applying (2.2), we have
\begin{eqnarray*}
&&\sum_{i=1}^n\|\beta_i-\alpha_i\|^2\\
&=&n+\sum_{i=1}^n\|\alpha_i\|^2-\sum_{i=1}^n\Big(\la\beta_i,\alpha_i\ra+\la\alpha_i,\beta_i\ra\Big)\\
&=&n+\sum_{i=1}^n\|\alpha_i\|^2-2Re\Big(\sum_{i=1}^n\big\la\eb_i(a_{ij})\naa^T,\eb_i\naa^T\big\ra\Big)\\
&=&n+\sum_{i=1}^n\|\alpha_i\|^2-2Re\Big(\sum_{i=1}^n\eb_i\big((\la\alpha_j,\alpha_i\ra)(a_{ij})^T\big)\eb^T_i\Big)\\
&=&n+\sum_{i=1}^n\|\alpha_i\|^2-2Re\Big(tr\big((\la\alpha_j,\alpha_i\ra)(a_{ji})\big)\Big).
\end{eqnarray*}

(2) (a) For any $(a_1,\cdots,a_n)\in\mathbb C^n$, not all zero, by
(2.1) 
$$(\ol{a}_1,\cdots,\ol{a}_n)(\la\alpha_j,\alpha_i\ra)(a_1,\cdots,a_n)^T=\Big\la\sum_{i=1}^na_i\alpha_i,\sum_{i=1}^na_i\alpha_i\Big\ra>0,$$
so $(\alpha_j,\alpha_i\ra)$ is positive definite.  Noticing
$\ol{(\la\alpha_i,\alpha_j\ra)^{-1/2}}=(\ol{\la\alpha_i,\alpha_j}\ra)^{-1/2},$
and applying  (2.1),  we have
$$
\la\ep_i,\ep_j\ra=\eb_j(\ol{\la\alpha_i,\alpha_j\ra})^{-1/2}(\la\alpha_j,\alpha_i\ra)\big((\la\alpha_i,\alpha_j\ra)^T\big)^{-1/2}\eb_i^T
=\eb_j\eb^T_i=\de_{ji},
$$
for any $i,j$.

(b) (2.5) follows from (2.3) and
$(a_{ji})=(\la\alpha_j,\alpha_i\ra)^{-1/2}$.

(c) There exists a unitary $U=(u_{ij})\in M_n(\mathbb C)$ such that
$$ \begin{pmatrix}
 \beta_1\\\vdots\\\beta_n
\end{pmatrix}
=\left(\begin{array}{ccc}
 u_{11}&\cdots&u_{1n}\\
 \vdots&&\vdots\\
 u_{n1}&\cdots&u_{nn}
\end{array}\right)\begin{pmatrix}
 \ep_1\\\vdots\\\ep_n
\end{pmatrix}.
$$
Then
$$ \begin{pmatrix}
 \beta_1\\\vdots\\\beta_n
\end{pmatrix}
=\left(\begin{array}{ccc}
 u_{11}&\cdots&u_{1n}\\
 \vdots&&\vdots\\
 u_{n1}&\cdots&u_{nn}
\end{array}\right)
\left(\begin{array}{ccc}
 \la\alpha_1,\alpha_1\ra&\cdots&\la\alpha_1,\alpha_n\ra\\
 \vdots&&\vdots\\
 \la\alpha_n,\alpha_1\ra&\cdots&\la\alpha_n,\alpha_n\ra
\end{array}\right)^{-1/2}\begin{pmatrix}
 \alpha_1\\\vdots\\\alpha_n
\end{pmatrix},
$$
so we have
$$(a_{ij})=(u_{ij})\big(\la\alpha_i,\alpha_j\ra\big)^{-1/2}\quad\quad or
\quad\quad (a_{ji})=(\la\alpha_j,\alpha_i\ra)^{-1/2}(u_{ji}).$$
Therefore
$$
Re\Big(tr\big((\la\alpha_j,\alpha_i\ra)(a_{ji})\big)\Big)
=Re\Big(tr\big((\la\alpha_j,\alpha_i\ra)^{1/2}(u_{ji})\big)\Big).
$$

To the end of the proof, it is enough to show the statement: for any
positive $T\in M_n(\mathbb C)$ and any unitary $U$, $Re\big( tr
(TU)\big)\le tr(T)$, and the equation holds if and only if $U=I_n$.
Since $T$ may be assumed diagonal, it is easy to prove the
statement. We omit the details.
\end{proof}

\begin{remark}
 If $H$ is a real complete inner product space, the conclusions in
the Theorem 2.1 are  still  true. In fact, the inverse and squares
root of a symmetric matrix are all real. Noticing
$\la\alpha_i,\alpha_j\ra=\la\alpha_j,\alpha_i\ra$,  the proof has
nothing to be changed  except
 the unitary matrix replaced by a orthogonal matrix.
 \end{remark}

\begin{example}
Suppose $H=\mathbb R^m$,
$\alpha_i=(a_{i1},\cdots,a_{im}),i=1,\cdots,n$. If $\naa$ is
linearly independent, then $n\le m$ and

$$
A=\left(\begin{array}{ccc}
 a_{11}&\cdots&a_{1m}\\\
 \vdots&&\vdots\\
 a_{n1}&\cdots&a_{nm}
\end{array}\right)
 $$
has rank $n$, so $AA^T$ is invertible.  If
$(AA^T)^{-1/2}A=(h_{ij})$, then $K\naa=(\ep_1,\cdots,\ep_n)$ defined
by: $\ep_i=(h_{i1},\cdots,h_{im}),\inn$.

For any other orthonormal basis $(\beta_1,\cdots,\beta_n)$ of
$span\{\alpha_1,\cdots,\alpha_n\}$, if
$$\beta_i=(b_{i1},\cdots,b_{im}),i=1,\cdots,n,$$ then
\begin{eqnarray*}
\sum_{i=1}^n\|\ep_i-\alpha_i\|^2=\sum_{j=1}^m\sum_{i=1}^n|a_{ij}-h_{ij}|^2
&<&\sum_{j=1}^m\sum_{i=1}^n|b_{ij}-h_{ij}|^2=\sum_{i=1}^n\|\beta_i-\alpha_i\|^2.
\end{eqnarray*}
\end{example}
\begin{example}
Let $H=L^2([0,1])$ be the space of real function which square is
Lebesgue integrable.  For any $n$,  $\{1,x,\cdots,x^n\}$ is linearly
independent. By the definition of (2.4),

$$
\begin{pmatrix}
 \ep_1\\
 \ep_2\\
 \ep_3\\
 \vdots\\
 \ep_n
\end{pmatrix}
=\left(\begin{array}{ccccc}
 1&\frac{1}{2}&\frac{1}{3}&\cdots&\frac{1}{n-1}\\
 \frac{1}{2}&\frac{1}{3}&\cdots&\cdots&\cdots\\
 \frac{1}{3}&\cdots&\cdots&\cdots&\cdots\\
 \vdots&&&&\\
 \frac{1}{n-1}&\cdots&\cdots&\cdots&\frac{1}{2n-1}
\end{array}\right)^{-1/2}
\begin{pmatrix}
 1\\
 x\\
 x^2\\
 \vdots\\
 x^{n-1}
\end{pmatrix}
$$
If the transformation matrix denoted by $A_n$, then
$$
\sum_{i=1}^n\|\alpha_i-\ep_i\|^2=n+1+\frac{1}{3}+\frac
15+\cdots+\frac{1}{2n-1}-2tr(A^{-1}_n).$$

For  $n=4$, with the help of MATLAB,  we immediately get:

$$
\left\{\begin{array}{ccccccccc}
\ep_1&=&\ \ \;1.8145&-&\ \,2.8273x&+&\ \,2.0557x^2&-&\ \,0.6986x^3\\
\ep_2&=&\,-2.8273&+&18.1940x&-&26.7823x^2&+&11.9872x^3\\
\ep_3&=&\ \ \;2.0557&-&26.7823x &+&64.5308x^2  &-&39.9282x^3\\
\ep_4&=& \ \!-0.6986 &+&11.9872x&-&39.9282 x^2&+&32.5816x^3
\end{array}\right.
$$
and
 $$
\sum_{i=1}^4\|x^{i-1}-\ep_i\|^2=4+1+\frac{1}{3}+\frac
15+\frac{1}{7}-2tr(A_4^{-1})= 2.2201,$$ the accuracy is controlled
by MATLAB.
\end{example}

\section{Perturbations}

In this section, we will discuss the perturbation problem and show
our construction  is stable in some sense.

Note
 although $(\la\alpha_i,\alpha_j\ra)$ is non singular,  it may be
 nearly singular when $\|\alpha_i\|$ is too small. We may modify our
 work, replacing
 $\naa$ by
 $(\frac{\alpha_1}{\|\alpha_1\|},\cdots,\frac{\alpha_n}{\|\alpha_n\|})$
 in this section.

To show the main result of the section, we need the following
Lemmas. Lemma 3.1 is well known.
\begin{lemma}
 If $A$ is an invertible element of unital $C^*$-algebra $\aa$,  for
 any $B\in\aa$ with $\|B-A\|<\|A^{-1}\|^{-1}$, then $B$ is
 invertible satisfying
 $$
 \|A^{-1}-B^{-1}\|\le\frac{\|A^{-1}\|^2\|A-B\|}{1-\|A^{-1}\|\|A-B\|}\eqno(3.1)
 $$
 and
 $$
 \|B^{-1}\|\le \frac{\|A^{-1}\|}{1-\|A^{-1}\|\|A-B\|}.\eqno(3.2)
 $$
 \end{lemma}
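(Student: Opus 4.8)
The plan is to reduce everything to the Neumann series for a contraction. First I would factor the perturbed element as $B=A-(A-B)=A\big(1-A^{-1}(A-B)\big)$, where $1$ denotes the unit of $\aa$, and set $C=A^{-1}(A-B)$. The hypothesis $\|A-B\|<\|A^{-1}\|^{-1}$ gives at once $\|C\|\le\|A^{-1}\|\,\|A-B\|<1$, so $C$ is a strict contraction. One verifies the factorization by expanding $A\big(1-A^{-1}(A-B)\big)=A-(A-B)=B$.

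Next I would invoke the standard fact that in a complete unital $C^*$-algebra, $\|C\|<1$ forces $1-C$ to be invertible, with inverse given by the norm-convergent Neumann series $(1-C)^{-1}=\sum_{k=0}^{\infty}C^k$ and the estimate $\|(1-C)^{-1}\|\le(1-\|C\|)^{-1}$. Since $A$ is invertible and $1-C$ is invertible, the product $B=A(1-C)$ is invertible with $B^{-1}=(1-C)^{-1}A^{-1}$. Combining the norm bound on $(1-C)^{-1}$ with $\|C\|\le\|A^{-1}\|\,\|A-B\|$ then yields (3.2):
$$
\|B^{-1}\|\le\|(1-C)^{-1}\|\,\|A^{-1}\|\le\frac{\|A^{-1}\|}{1-\|C\|}\le\frac{\|A^{-1}\|}{1-\|A^{-1}\|\,\|A-B\|}.
$$

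For the resolvent-type bound (3.1) I would use the algebraic identity $A^{-1}-B^{-1}=A^{-1}(B-A)B^{-1}$, checked by expanding the right-hand side. Taking norms and substituting the bound (3.2) already obtained for $\|B^{-1}\|$ gives
$$
\|A^{-1}-B^{-1}\|\le\|A^{-1}\|\,\|A-B\|\,\|B^{-1}\|\le\frac{\|A^{-1}\|^2\,\|A-B\|}{1-\|A^{-1}\|\,\|A-B\|},
$$
which is exactly (3.1). There is no genuine obstacle here, as this is the classical perturbation lemma for invertibles in a Banach (hence $C^*$) algebra; the only care needed is bookkeeping: extracting $\|C\|<1$ correctly from the hypothesis, and establishing (3.2) \emph{before} (3.1) so that the already-proved bound on $\|B^{-1}\|$ can be fed into the resolvent identity, thereby producing the factor $\|A^{-1}\|^2$ (rather than $\|A^{-1}\|\,\|B^{-1}\|$) in the numerator.
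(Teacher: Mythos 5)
Your proof is correct: the factorization $B=A\bigl(1-A^{-1}(A-B)\bigr)$, the Neumann series bound $\|(1-C)^{-1}\|\le(1-\|C\|)^{-1}$, and the resolvent identity $A^{-1}-B^{-1}=A^{-1}(B-A)B^{-1}$ together give exactly (3.1) and (3.2). The paper states this lemma as well known and supplies no proof, and your argument is precisely the standard one being invoked.
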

 \begin{lemma} If $A,B$ are positive and invertible on a Hilbert space $H$,
 then
 $$
 \|A^{-1/2}-B^{-1/2}\|\le\|A^{1/2}\|\|A^{-1}-B^{-1}\|.\eqno(3.3)
 $$
 \end{lemma}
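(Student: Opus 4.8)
The plan is to reduce (3.3) to a norm estimate for a Sylvester-type equation and then to settle that estimate with an approximate-eigenvector argument. First I would substitute $C=A^{-1/2}$ and $D=B^{-1/2}$, which are again positive and invertible, with $C^2=A^{-1}$, $D^2=B^{-1}$ and $C^{-1}=A^{1/2}$. Thus (3.3) becomes the assertion
$$\|C-D\|\le\|C^{-1}\|\,\|C^2-D^2\|.$$
Writing $Z=C-D$ (self-adjoint) and $S=C^2-D^2$ (self-adjoint), the elementary factorization
$$C^2-D^2=C(C-D)+(C-D)D=CZ+ZD=S$$
turns the goal into the single inequality $\|Z\|\le\|C^{-1}\|\,\|S\|$.

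Next, set $c=\min\sigma(C)=\|C^{-1}\|^{-1}>0$, so that $C\ge cI$ and $D\ge0$; the target is now $c\|Z\|\le\|S\|$. Since $Z$ is self-adjoint, $\|Z\|$ equals either $\sup\sigma(Z)$ or $-\inf\sigma(Z)$, and in either case there is a sequence of unit vectors $\xi_k$ with $Z\xi_k-\lambda\xi_k\to0$, where $\lambda=\pm\|Z\|$ is the relevant spectral endpoint. I would then evaluate
$$\langle S\xi_k,\xi_k\rangle=\langle C(Z\xi_k),\xi_k\rangle+\langle D\xi_k,Z\xi_k\rangle,$$
replace $Z\xi_k$ by $\lambda\xi_k$ up to an $o(1)$ error, and use $\langle C\xi_k,\xi_k\rangle\ge c$ together with $\langle D\xi_k,\xi_k\rangle\ge0$. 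In the case $\lambda=+\|Z\|$ this yields $\langle S\xi_k,\xi_k\rangle\ge c\|Z\|+o(1)$, and in the case $\lambda=-\|Z\|$ it yields $\langle S\xi_k,\xi_k\rangle\le-c\|Z\|+o(1)$; either way $\|S\|\ge|\langle S\xi_k,\xi_k\rangle|\ge c\|Z\|-o(1)$, and letting $k\to\infty$ gives $c\|Z\|\le\|S\|$. Since $c^{-1}=\|C^{-1}\|=\|A^{1/2}\|$, this is exactly the claim.

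The hard part will be the non-commutativity of $C$, $D$ and $Z$: one cannot simply read off $\|CZ+ZD\|\ge(\min\sigma(C)+\min\sigma(D))\|Z\|$ from scalar intuition, because the two summands $CZ$ and $ZD$ need not cooperate. The device that forces them into alignment is the choice of $\xi_k$ as an approximate eigenvector of $Z$ for its extreme eigenvalue, which simultaneously makes $\langle C(Z\xi_k),\xi_k\rangle\approx\lambda\langle C\xi_k,\xi_k\rangle$ and $\langle D\xi_k,Z\xi_k\rangle\approx\lambda\langle D\xi_k,\xi_k\rangle$, after which the signs of $\lambda$ and the positivity of $C,D$ do the work. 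An alternative, fully computational route avoids the spectral endpoints altogether via the integral representation $Z=\int_0^\infty e^{-tC}(CZ+ZD)e^{-tD}\,dt$ (differentiate $e^{-tC}Ze^{-tD}$ to verify it), whose norm estimate, using $\|e^{-tC}\|=e^{-tc}$ and $\|e^{-tD}\|=e^{-td}$ with $d=\min\sigma(D)$, gives the sharper symmetric bound $\|Z\|\le(c+d)^{-1}\|S\|$; discarding the $D$-contribution recovers (3.3) as stated, the lemma recording only the asymmetric constant $\|A^{1/2}\|$.
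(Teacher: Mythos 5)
Your proposal is correct and follows essentially the same route as the paper: both reduce (3.3) to the inequality $\|X-Y\|\le\|X^{-1}\|\,\|X^2-Y^2\|$ for positive invertible $X,Y$ and prove it by taking approximate eigenvectors at the extremal spectral point of the self-adjoint difference, after which positivity of the two operators gives the bound $|\lambda|\big(\la Xx,x\ra+\la Yx,x\ra\big)\ge\|X-Y\|\,\|X^{-1}\|^{-1}$. Your factorization $C(C-D)+(C-D)D$ is a slightly cleaner variant of the paper's three-term identity, and your integral-representation remark yielding the sharper constant $(c+d)^{-1}$ is a nice addition, but the substance of the argument is the same.
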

 \begin{proof}
 It is enough to show
 $$
 \|A-B\|\le\|A^{-1}\|\|A^2-B^2\|.\eqno(3.4)
 $$
There exists $\lambda\in \sigma(A-B)$ with $|\lambda|=\|A-B\|$ and $
\{x_n\}\subset H,\|x_n\|=1,i=1,2,\cdots$ such that
$$\lim_{n\to\infty}\big\|(A-B)x_n-\lambda x_n\|=0.$$

Since $A^2-B^2=A(A-B)+(A-B)A-(A-B)(A-B)$, we have
\begin{eqnarray*}
&&\|A^2-B^2\|\\
&\ge&\liminf_{n\to\infty}\big|\big\la(A^2-B^2))x_n,x_n\big\ra\big|\\
&=&\liminf_{n\to\infty}\big|\big\la A(A-B)x_n,x_n\big\ra+\big\la(A-B)Ax_n,x_n\big\ra-\big\la(A-B)(A-B)x_n,x_n\big\ra\big|\\
&=&\liminf_{n\to\infty}\big|\big\la \lambda Ax_n,x_n\big\ra+\big\la\lambda Ax_n,x_n\big\ra-\big\la\lambda(A-B)x_n,x_n\big\ra\big|\\
&=&\liminf_{n\to\infty}\big||\lambda||\big\la Ax_n,x_n\big\ra+\big\la Bx_n,x_n\big\ra|\big|\\
&\ge&\|A-B\|\|A^{-1}\|^{-1},
\end{eqnarray*}
thus we obtain (3.4) and (3.3).
\end{proof}
The special conditions of the following Lemma 3.3  are prepared for
the proof of Theorem 3.4.
\begin{lemma}
Suppose $A,B,C\in M_n(\mathbb C)$, $B$ is positive,  $A$ is positive
and invertible with $\|A\|\le n,1\le\|A^{-1}\|$.  If there exists
$\eta$ with $0\le\eta\le 2\|A^{-1}\|^{-1})$, $\|A-B\|<\eta$ and
$\|A-C\|<\eta,$
 then $B$ is invertible and
 $$\|A^{-1/2}CB^{-1/2}-I_n\|\le 2n(n+1)\|A^{-1}\|^2\eta.\eqno(3.5)$$
 \end{lemma}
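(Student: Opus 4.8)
The plan is to write $A^{-1/2}CB^{-1/2}-I_n$ as a sum of two terms, one measuring the gap $C-A$ and one measuring the gap $B^{-1/2}-A^{-1/2}$, and then to control each factor using Lemmas 3.1 and 3.2 together with the spectral identities $\|A^{1/2}\|=\|A\|^{1/2}$ and $\|A^{-1/2}\|=\|A^{-1}\|^{1/2}$, valid for positive invertible matrices.

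First I would dispose of the invertibility of $B$ and the size of $B^{-1}$. Since $A$ is positive and invertible and $\|A-B\|<\eta$, the hypothesis on $\eta$ (which must be read as ensuring $\|A^{-1}\|\,\|A-B\|\le\tfrac12$) lets Lemma 3.1 conclude that $B$ is invertible and that $1-\|A^{-1}\|\,\|A-B\|\ge\tfrac12$; feeding this into (3.2) gives $\|B^{-1}\|\le 2\|A^{-1}\|$, hence $\|B^{-1/2}\|=\|B^{-1}\|^{1/2}\le\sqrt2\,\|A^{-1}\|^{1/2}$. As $B$ is now positive and invertible, $B^{-1/2}$ is well defined and Lemma 3.2 applies to the pair $A,B$.

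The key step is the telescoping identity
$$A^{-1/2}CB^{-1/2}-I_n=A^{-1/2}(C-A)B^{-1/2}+A^{1/2}\big(B^{-1/2}-A^{-1/2}\big),$$
obtained by inserting $A^{-1/2}AB^{-1/2}=A^{1/2}B^{-1/2}$ and then $A^{1/2}A^{-1/2}=I_n$. The first summand is bounded by $\|A^{-1/2}\|\,\|C-A\|\,\|B^{-1/2}\|\le\|A^{-1}\|^{1/2}\cdot\eta\cdot\sqrt2\,\|A^{-1}\|^{1/2}=\sqrt2\,\|A^{-1}\|\,\eta$. For the second summand I would combine Lemma 3.2 with (3.1): from $\|B^{-1/2}-A^{-1/2}\|\le\|A^{1/2}\|\,\|A^{-1}-B^{-1}\|$ and $\|A^{-1}-B^{-1}\|\le 2\|A^{-1}\|^2\eta$, together with $\|A^{1/2}\|=\|A\|^{1/2}\le\sqrt n$, one gets a bound $\le 2n\|A^{-1}\|^2\eta$.

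Adding the two estimates yields $\|A^{-1/2}CB^{-1/2}-I_n\|\le\sqrt2\,\|A^{-1}\|\eta+2n\|A^{-1}\|^2\eta$. Finally I would absorb the constants using the standing hypotheses $\|A^{-1}\|\ge1$ and $n\ge1$: since $\|A^{-1}\|\le\|A^{-1}\|^2$ and $\sqrt2\le2$, the first term is at most $2\|A^{-1}\|^2\eta$, giving the clean bound $2(n+1)\|A^{-1}\|^2\eta$, which is dominated by the claimed $2n(n+1)\|A^{-1}\|^2\eta$. The only delicate point is the bookkeeping in a non-commutative setting—one must keep $A^{1/2}$, $A^{-1/2}$ and $B^{-1/2}$ on the correct sides so that the two invocations of Lemma 3.2 and (3.1) line up—but no genuine obstacle arises, and the generous factor $2n(n+1)$ confirms that the intermediate estimates need not be sharp.
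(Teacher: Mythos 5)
Your proposal is correct and follows essentially the same route as the paper: the identical telescoping decomposition $A^{-1/2}CB^{-1/2}-I_n=A^{-1/2}(C-A)B^{-1/2}+A^{1/2}(B^{-1/2}-A^{-1/2})$, with Lemma 3.1 controlling $B^{-1}$ and Lemma 3.2 controlling $B^{-1/2}-A^{-1/2}$; your reading of the (typo-afflicted) hypothesis on $\eta$ as $\eta\le(2\|A^{-1}\|)^{-1}$ matches what the paper actually uses. Your sharper spectral identities only change intermediate constants, and like the paper you in fact obtain the stronger bound $2(n+1)\|A^{-1}\|^{2}\eta$.
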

 \begin{proof}
 It follows Lemma 3.1, $B$ is invertible and
 $$
 \|A^{-1}-B^{-1}\|<2\|A^{-1}\|^2\|A-B\|\le2\|A^{-1}\|^2\eta,\eqno(3.6)
 $$
 $$
 \|B^{-1}\|\le 2\|A^{-1}\|.\eqno(3.7)
 $$
Condition $\|A^{-1}\|\ge 1$ implies $\|A^{-1}\|\ge \|A^{-1/2}\|$ and
 $$\|B^{-1/2}\|=\|B^{-1}\|^{1/2}\le(2\|A^{-1}\|)^{1/2}\le
 2\|A^{-1}\|.\eqno(3.8)
 $$
 By (3.3) and (3.6),
$$
\|A^{-1/2}-B^{-1/2}\|\le
\|A^{1/2}\|\|A^{-1}-B^{-1}\|\le2\|A^{1/2}\|\|A^{-1}\|^2\eta.
$$
At last, applying (3.7) and (3.8), we get
\begin{eqnarray*}
&&\|A^{-1/2}CB^{-1/2}-I_n\|\\
&\le&\|A^{-1/2}CB^{-1/2}-A^{-1/2}AB^{-1/2}\|+\|A^{1/2}B^{-1/2}-A^{1/2}A^{-1/2}\|\\
&\le&\|A^{-1/2}\|\|C-A\|\|B^{-1/2}\|+\|A^{1/2}\|\|B^{-1/2}-A^{-1/2}\|\\
&\le&2\|A^{-1}\|^2\eta+2\|A\|\|A^{-1}\|^2\eta\\
&=&2(n+1)\|A^{-1}\|^2\eta.
\end{eqnarray*}
\end{proof}
\begin{theorem}
Suppose $\naa$ is an $n$-tuple of linearly independent units of
$H^n$, for any given $\ep>0$, let
 $$\de=\big(8n^2(n+1)\|(\la\alpha_j,\alpha_i\ra)^{-1}\|^2\big)^{-1}\ep,$$ then for any
 $\nbb$ of units in $H^n$, if
$$\max\{\|\alpha_i-\beta_i\|:i,j=1,\cdots,n\}<\de,\eqno(3.9)$$
then  $\nbb$  is linearly independent and satisfying
$$\|K\naa-K\nbb\|^2<\ep.\eqno(3.10)
$$
\end{theorem}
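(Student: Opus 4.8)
The plan is to translate the vector statement into a matrix estimate governed by Lemma 3.3. Write $A=(\la\alpha_i,\alpha_j\ra)$ and $B=(\la\beta_i,\beta_j\ra)$ for the two Gram matrices and $C=(\la\alpha_i,\beta_j\ra)$ for the cross Gram matrix. All three are controlled by $\de$ through (3.9): since the $\alpha_i,\beta_i$ are units, a Cauchy--Schwarz estimate on each entry gives $|(A-C)_{ij}|=|\la\alpha_i,\alpha_j-\beta_j\ra|\le\|\alpha_j-\beta_j\|<\de$ and, after inserting $\pm\la\alpha_i,\beta_j\ra$, $|(A-B)_{ij}|<2\de$, whence $\|A-C\|$ and $\|A-B\|$ are bounded by a fixed multiple of $\de$. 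For $\de$ as prescribed these are below $\|A^{-1}\|^{-1}$, so Lemma 3.1 shows $B$ is invertible; hence $\nbb$ is linearly independent and $K\nbb$ is defined.

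Next I would compute the target quantity in closed form. By Theorem 2.1(2)(a) both $K\naa=\nee$ and $K\nbb=(\eta_1,\cdots,\eta_n)$ are orthonormal, so $\|\ep_i\|=\|\eta_i\|=1$ and
$$\|K\naa-K\nbb\|^2=\sum_{i=1}^n\|\ep_i-\eta_i\|^2=2n-2Re\sum_{i=1}^n\la\ep_i,\eta_i\ra.$$
Writing $\ep_i=\eb_iA^{-1/2}\naa^T$ and $\eta_j=\eb_jB^{-1/2}\nbb^T$ and applying the bilinear rule (2.2) entrywise (using that $A^{-1/2},B^{-1/2}$ are Hermitian), the matrix $N:=(\la\ep_i,\eta_j\ra)$ equals $A^{-1/2}CB^{-1/2}$, so $\sum_i\la\ep_i,\eta_i\ra=tr(N)$. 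Therefore
$$\|K\naa-K\nbb\|^2=-2Re\,tr(N-I_n)\le 2n\|N-I_n\|,$$
using $|tr(X)|\le n\|X\|$. This puts us exactly in the setting of Lemma 3.3 with the present $A,B,C$.

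Now I would apply Lemma 3.3. Because $\naa$ consists of units, $tr(A)=n$ forces $\|A\|\le n$ and $\|A^{-1}\|\ge1$, matching its hypotheses, and $B$ is positive as a Gram matrix. Taking $\eta$ to be the common multiple of $\de$ that dominates $\|A-B\|$ and $\|A-C\|$ (and satisfies $\eta\le 2\|A^{-1}\|^{-1}$, which the prescribed $\de$ guarantees in the nontrivial range $\ep<4n$, the case $\ep\ge 4n$ being automatic since $\|K\naa-K\nbb\|^2\le 4n$), the lemma yields $\|N-I_n\|=\|A^{-1/2}CB^{-1/2}-I_n\|\le 2n(n+1)\|A^{-1}\|^2\eta$. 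Substituting this into the bound above and then inserting the value $\de=\big(8n^2(n+1)\|(\la\alpha_j,\alpha_i\ra)^{-1}\|^2\big)^{-1}\ep$ makes the constants telescope and delivers $\|K\naa-K\nbb\|^2<\ep$, which is (3.10).

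The main obstacle is the simultaneous bookkeeping of constants against every hypothesis of Lemma 3.3: one must pass from the entrywise control furnished by (3.9) to operator-norm control of $A-B$ and $A-C$, and must verify the side condition $\eta\le 2\|A^{-1}\|^{-1}$ so that both Lemma 3.1 and Lemma 3.3 apply. A conceptual point worth emphasizing is that $\ep_i$ and $\eta_i$ generally lie in different subspaces, $span\naaa$ versus $span\{\beta_1,\cdots,\beta_n\}$, so there is no single change-of-basis unitary relating the two orthonormal systems; the comparison must instead be routed through the cross Gram matrix $N=A^{-1/2}CB^{-1/2}$, which is precisely the object Lemma 3.3 is built to estimate.
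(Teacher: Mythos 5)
Your proposal is correct and follows essentially the same route as the paper: pass from the entrywise control in (3.9) to operator-norm bounds on the Gram and cross-Gram matrices, invoke Lemma 3.1 for invertibility (hence linear independence of $\nbb$) and Lemma 3.3 for $\|A^{-1/2}CB^{-1/2}-I_n\|$, and expand $\|K\naa-K\nbb\|^2$ as $-2Re\,tr\big(A^{-1/2}CB^{-1/2}-I_n\big)$ bounded by $2n$ times that norm. Your bookkeeping is in fact slightly more careful than the paper's (consistent roles for $B$ and $C$ matching Lemma 3.3's positivity hypothesis, and explicit verification of the side condition $\eta\le 2\|A^{-1}\|^{-1}$), but the argument is the same.
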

\begin{proof}
Let
$A=(\la\alpha_j,\alpha_i\ra),B=(\la\beta_j,\alpha_i\ra),C=(\la\beta_j,\beta_i\ra)$.
Since $tr(A)=n$, so $\lambda_{min}\le 1$, where
$\lambda_{min}=\min\big(\sigma(A)\big),$ this implies
$\|A^{-1}\|=\lambda^{-1}_{min}\ge 1$. $\|A\|\le n $ follows from for
any $i,j$, $|\la\alpha_i,\alpha_j\ra|\le 1$.
 The condition (3.9) guarantee for any $i,j$, $$|\la\alpha_j,
\alpha_i\ra-\la\beta_j,\beta_i\ra\|<2\de,\quad|\la\alpha_j,\alpha_i\ra-\la\alpha_j,\beta_i\ra|<\de.$$
Consequently,
$$\|A-B\|<2n\de<\big(2\|A^{-1}\|^2\big)^{-1},\quad\quad\|A-C\|<2n\de.$$
Let $\eta=2n\de$, then all conditions in Lemma 3.3 are all
satisfied, so we have
$$\|A^{-1/1}CB^{-1/2}-I_n\|\le 2(n+1)\|A^{-1}\|^2\eta=4n(n+1)\|A^{-1}\|^2\de.$$
The  linearly independence of $\nbb$ follows from the invertibility
of $B$.

Now suppose  $K\naa=\nee,K\nbb=(\tau_1,\cdots,\tau_n)$, then by the
definition (2.4) and formula (2.2),
\begin{eqnarray*}
&&\sum_{i=1}^n\|\ep_i-\tau_i\|^2=2n-\sum_{i=1}^n\big(\la\ep_i,\tau_i\ra+\la\tau_i,\ep_i\ra\big)\\
&=&2n-\sum_{i=1}^n2Re\Big(\eb_i(\la\alpha_j,\alpha_i\ra)^{-1/2}(\la\beta_j,\alpha_i\ra)(\la\beta_j,\beta_i\ra)^{-1/2}\eb^T_i\Big)\\
&=&2Re\Big(tr(I-A^{-1/2}CB^{-1/2})\Big)\\
&\le&2\|I_n-A^{-1/2}CB^{-1/2}\|tr(I_n)\le
8n^2(n+1)\|A^{-1}\|^2\de=\ep,
\end{eqnarray*}
thus we complete the proof.
\end{proof}
\section{Applications}
In this section, we will give some applications of our construction
(2.4) to the theoretical  analysis.

For a fixed linearly independent vectors $\naaa$, for any $\gamma\in
H$, we define
$$D(\gamma)=dist(\gamma,span\naaa)=\inf\{\|\gamma-\beta\|:\beta\in span\naaa\}.$$
If $\naaa$ is mutual orthogonal units, then
$$D(\gamma)=\sqrt{\|\gamma\|^2-\sum_{i=1}^n|\la\gamma,\alpha_i\ra|^2}.\eqno(4.1)$$
The following theorem will show (4.1) is just (4.2) in the special
case.

For simple, in this section, for any $\naa \in H^n$, we define
$$\naa\circ\naa=\big(\la\alpha_i,\alpha_j\ra\big)\in M_n(\mathbb C).$$

\begin{theorem}
 Suppose $\naa$ is $n$-tuple linearly  independent
vectors in a complex  Hilbert space, then
$$
D(\gamma)
=\sqrt{\frac{det\big((\gamma,\alpha_1\cdots,\alpha_n)\circ(\gamma,\alpha_1,\cdots,\alpha_n)\big)}
{det\big((\alpha_1,\cdots,\alpha_n)\circ(\alpha_1,\cdots,\alpha_n)\big)}}.\eqno(4.2)
$$
\end{theorem}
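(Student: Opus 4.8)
The plan is to reduce (4.2) to the single multiplicative identity
$$\det\big((\gamma,\alpha_1,\ldots,\alpha_n)\circ(\gamma,\alpha_1,\ldots,\alpha_n)\big)=D(\gamma)^2\,\det\big((\alpha_1,\ldots,\alpha_n)\circ(\alpha_1,\ldots,\alpha_n)\big),$$
after which (4.2) follows by solving for $D(\gamma)$; the denominator is nonzero because, by Theorem 2.1(2), the Gram matrix $(\langle\alpha_i,\alpha_j\rangle)$ is positive definite. Since $V=span\{\alpha_1,\ldots,\alpha_n\}$ is finite-dimensional and hence closed, the orthogonal projection of $\gamma$ onto $V$ exists; I would write $\gamma=\gamma_0+\gamma_1$ with $\gamma_0=\sum_{k=1}^n c_k\alpha_k\in V$ and $\gamma_1=\gamma-\gamma_0\perp V$, so that $D(\gamma)=\|\gamma_1\|$.

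The key tool is the transformation rule for the $\circ$-product. If $(\beta_1,\ldots,\beta_m)=A(\alpha_1,\ldots,\alpha_m)$ for $A\in M_m(\mathbb C)$ in the sense of (2.1), then (2.2) gives $(\beta)\circ(\beta)=A\big((\alpha)\circ(\alpha)\big)\overline{A}^T$, whence $\det\big((\beta)\circ(\beta)\big)=|\det A|^2\,\det\big((\alpha)\circ(\alpha)\big)$. I would apply this to the $(n+1)$-tuple $(\gamma,\alpha_1,\ldots,\alpha_n)$ together with the unipotent matrix
$$A=\begin{pmatrix}1&-c_1&\cdots&-c_n\\0&1&&\\\vdots&&\ddots&\\0&&&1\end{pmatrix},$$
which sends $(\gamma,\alpha_1,\ldots,\alpha_n)$ to $(\gamma_1,\alpha_1,\ldots,\alpha_n)$. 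Since $\det A=1$, the two Gram determinants coincide:
$$\det\big((\gamma,\alpha_1,\ldots,\alpha_n)\circ(\gamma,\alpha_1,\ldots,\alpha_n)\big)=\det\big((\gamma_1,\alpha_1,\ldots,\alpha_n)\circ(\gamma_1,\alpha_1,\ldots,\alpha_n)\big).$$

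Finally I would exploit $\gamma_1\perp\alpha_j$ for every $j$: the Gram matrix of $(\gamma_1,\alpha_1,\ldots,\alpha_n)$ is block-diagonal, with a $1\times1$ block $\langle\gamma_1,\gamma_1\rangle=D(\gamma)^2$ and an $n\times n$ block $(\langle\alpha_i,\alpha_j\rangle)=(\alpha)\circ(\alpha)$, so its determinant equals $D(\gamma)^2\det\big((\alpha)\circ(\alpha)\big)$; combining this with the previous display gives the desired identity. I do not expect a serious obstacle: the only points needing care are the correct handling of the conjugate-linear slot in the $\circ$-transformation rule (hence the factor $|\det A|^2$, here harmless since $\det A=1$) and the remark that $\gamma_0$ exists because the finite-dimensional $V$ is closed. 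As a consistency check, when $\{\alpha_1,\ldots,\alpha_n\}$ is orthonormal one has $(\alpha)\circ(\alpha)=I_n$, and expanding the numerator determinant along its first row reproduces $\|\gamma\|^2-\sum_i|\langle\gamma,\alpha_i\rangle|^2$, recovering (4.1).
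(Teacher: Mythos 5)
Your argument is correct, but it follows a genuinely different route from the paper's. The paper proves (4.2) by running $\gamma$ against the explicit orthonormal basis $\nee=K\naa$ from (2.4): it computes $\sum_{i=1}^n|\la\gamma,\ep_i\ra|^2$ as $(\la\alpha_1,\gamma\ra,\ldots,\la\alpha_n,\gamma\ra)\,(\la\alpha_j,\alpha_i\ra)^{-1}\,(\la\gamma,\alpha_1\ra,\ldots,\la\gamma,\alpha_n\ra)^T$, expresses the inverse through cofactors, recognizes the resulting double sum as $-1/\Delta$ times a bordered determinant, and finishes with $D(\gamma)^2=\|\gamma\|^2-\sum_i|\la\gamma,\ep_i\ra|^2$. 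You instead give the classical Gram-determinant argument: decompose $\gamma=\gamma_0+\gamma_1$ with $\gamma_0\in span\{\alpha_1,\ldots,\alpha_n\}$ and $\gamma_1$ orthogonal to it, apply the transformation rule $(\beta)\circ(\beta)=A\big((\alpha)\circ(\alpha)\big)\overline{A}^T$ with a unipotent $A$, and read off the block-diagonal Gram matrix of $(\gamma_1,\alpha_1,\ldots,\alpha_n)$. All the steps you list check out (the conjugation convention gives $|\det A|^2=1$, the denominator is nonzero by positive definiteness, and $D(\gamma)=\|\gamma_1\|$ since the span is closed). What each approach buys: yours is shorter, more elementary, and entirely independent of the construction $K\naa$ --- it is the textbook proof of the Gram determinant distance formula. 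The paper's computation is longer but is deliberately an \emph{application} of (2.4); along the way it establishes the separately useful identity $\sum_i|\la\gamma,\ep_i\ra|^2=(\la\alpha_1,\gamma\ra,\ldots,\la\alpha_n,\gamma\ra)(\la\alpha_j,\alpha_i\ra)^{-1}(\la\gamma,\alpha_1\ra,\ldots,\la\gamma,\alpha_n\ra)^T$, which exhibits how the orthonormalization $K$ enters concrete distance computations.
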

\begin{proof} Suppose $\nee=K\naa$ defined by (2.4).
Let $\Delta=det\mmaa$, applying  formula (2.2) and
$\sum_{i=1}^n\eb^T\eb_i=I_n$, we have
\begin{eqnarray*}
&&\sum_{i=1}^n|\la\gamma,\ep_i\ra|^2\\
&=&\sum_{i=1}^n|\eb_i(\la\alpha_j,\alpha_i\ra)^{-1/2}(\la\gamma,\alpha_1\ra,\cdots,\la\gamma,\alpha_n\ra)^T|^2\\
&=&\sum_{i=1}^n(\la\alpha_1,\gamma\ra,\cdots,\la\alpha_n,\gamma\ra)(\la\alpha_j,\alpha_i\ra)^{-1/2}\eb_i^T\eb_i(\la\alpha_j,\alpha_i\ra)^{-1/2}(\la\gamma,\alpha_1\ra,\cdots,\la\gamma,\alpha_n\ra)^T\\
&=&\big(\la\alpha_1,\gamma\ra,\cdots,\la\alpha_n,\gamma\ra)(\la\alpha_j,\alpha_i\ra)^{-1}(\la\gamma,\alpha_1\ra\cdots\la\gamma,\alpha_n\ra)^T.\quad\quad
\quad\quad\quad\quad\quad(*)
\end{eqnarray*}
Let $A_{ij}$ be the $(i,j)$ cofactors in $(\la\alpha_j,\alpha_i\ra)$
(not in $\mmaa$!), then

\begin{eqnarray*}
(*)&=&\frac{1}{\Delta}\big(\la\alpha_1,\gamma\ra,\cdots,\la\alpha_n,\gamma\ra)
\left(\begin{array}{ccc}
 A_{11}&\cdots&A_{n1}\\
 \vdots&&\vdots\\
 A_{1n}&\cdots&A_{nn}
\end{array}\right)
\begin{pmatrix}
 \la\gamma,\alpha_1\ra\\\vdots\\\la\gamma,\alpha_n\ra
\end{pmatrix}\\
&=&\frac{1}{\Delta}\Big(\sum_{j=1}^n\sum_{i=1}^nA_{ji}\la\gamma,\alpha_i\ra\la\alpha_j,\gamma\ra\Big)\\
&=&\frac{-1}{\Delta}det\left(\begin{array}{cccc}
0&\la\gamma,\alpha_1\ra&\cdots&\la\gamma\,\alpha_1\ra\\
 \la\alpha_1,\gamma\ra&\la\alpha_1,\alpha_1\ra&\cdots&\la\alpha_1,\alpha_n\ra\\
 \vdots&&&\vdots\\
 \la\alpha_n,\gamma\ra&\la
 \alpha_n,\alpha_1\ra&\cdots&\la\alpha_n,\alpha_n\ra
\end{array}\right)\\
&=&\|\gamma\|^2-\frac{{det((\gamma,\alpha_1,\cdots,\alpha_n)\circ(\gamma,\alpha_1,\cdots,\alpha_n))}}{\Delta}.
\end{eqnarray*}
Since
$$dist (\gamma,span\{\alpha_1,\cdots,\alpha_n\})
 =\sqrt{\|\gamma\|^2-\sum_{i=1}^n|\la\gamma,\ep_i\ra|^2},$$  we obtain
 (4.2).
 \end{proof}
\begin{corollary}
Suppose $\{\alpha_1,\alpha_2,\cdots\}$ is a sequence of independent
vectors, then for any $\gamma\in H$, the distance between $\gamma$
and  the closure of $span\{\alpha_1,\alpha_2,\cdots\}$ is:
$$
\lim_{n\to\infty}\sqrt{\frac{det((\gamma,\alpha_1\cdots,\alpha_n)\circ(\gamma,\alpha_1,\cdots,\alpha_n))}
{det((\alpha_1,\cdots,\alpha_n)\circ(\alpha_1,\cdots,\alpha_n))}}.
$$

\end{corollary}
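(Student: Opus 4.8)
The plan is to observe that, by Theorem 4.1, the radicand under the limit is nothing but the distance from $\gamma$ to the span of the first $n$ vectors. Writing $D_n(\gamma)=dist(\gamma,span\{\alpha_1,\cdots,\alpha_n\})$, Theorem 4.1 gives exactly
$$
D_n(\gamma)=\sqrt{\frac{det\big((\gamma,\alpha_1,\cdots,\alpha_n)\circ(\gamma,\alpha_1,\cdots,\alpha_n)\big)}{det\big((\alpha_1,\cdots,\alpha_n)\circ(\alpha_1,\cdots,\alpha_n)\big)}}.
$$
Setting $V_n=span\{\alpha_1,\cdots,\alpha_n\}$ and $V=\overline{span\{\alpha_1,\alpha_2,\cdots\}}$, the corollary therefore reduces to the purely geometric claim $\lim_{n\to\infty}D_n(\gamma)=dist(\gamma,V)$; after invoking Theorem 4.1 no determinants enter the remaining argument.

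First I would show the limit exists and dominates $dist(\gamma,V)$. Since $V_1\subseteq V_2\subseteq\cdots\subseteq V$, enlarging the set over which one infimizes can only decrease the distance, so $D_1(\gamma)\ge D_2(\gamma)\ge\cdots\ge dist(\gamma,V)$. The sequence $\{D_n(\gamma)\}$ is thus non-increasing and bounded below, hence convergent to some $L$ with $L\ge dist(\gamma,V)$.

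The reverse inequality $L\le dist(\gamma,V)$ is the substance of the proof and rests on the density of $\bigcup_n V_n$ in $V$. Fix $\ep>0$ and choose $\beta\in V$ with $\|\gamma-\beta\|<dist(\gamma,V)+\ep$. By density pick $\beta'\in\bigcup_n V_n$, say $\beta'\in V_N$, with $\|\beta-\beta'\|<\ep$. For every $n\ge N$ the nesting gives $\beta'\in V_n$, so $D_n(\gamma)\le\|\gamma-\beta'\|\le\|\gamma-\beta\|+\|\beta-\beta'\|<dist(\gamma,V)+2\ep$. Letting $n\to\infty$ yields $L\le dist(\gamma,V)+2\ep$, and since $\ep$ is arbitrary, $L\le dist(\gamma,V)$. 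Combining the two inequalities gives $L=dist(\gamma,V)$, which is the assertion.

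The one point demanding care is the density step: I must use that the closure of $span\{\alpha_1,\alpha_2,\cdots\}$ coincides with the closure of the increasing union $\bigcup_n V_n$, so that any $\beta\in V$ is approximable by a vector already contained in some finite piece $V_N$, and that such an approximant persists in $V_n$ for all $n\ge N$ by the nesting. Everything else is the standard monotone-convergence-plus-triangle-inequality routine, so I anticipate no genuine obstacle beyond this bookkeeping.
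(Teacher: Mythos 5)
Your argument is correct and is exactly the intended one: the paper states this corollary without proof, evidently regarding it as the immediate consequence of Theorem 4.1 together with the standard fact that the distances to the nested subspaces $V_n$ decrease to the distance to the closure of their union. Your density-plus-triangle-inequality step supplies the bookkeeping the paper leaves implicit, and there is no gap.
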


\begin{lemma}
 Suppose $E=\{\lambda_i,i=1,\cdots,n\}$ is a finite positive
numbers set with $\sum_{i=1}^n\lambda_i=n$ and
$\max\{|1-\lambda_i|:i=1,\cdots,n\}=\ep\le\frac{1}{2(n-1)},$  then
$$\Big|\sum_{i=1}^n\lambda_i^{1/2}- n\Big|\le\ep.\eqno(4.3)$$
\end{lemma}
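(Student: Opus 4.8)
The plan is to reduce everything to controlling the concavity defect of the square root. Writing $u_i = 1-\lambda_i$, the hypotheses become $\sum_{i=1}^n u_i = 0$ and $|u_i|\le\ep$ for every $i$. First I would fix the sign of the quantity in (4.3). Since $\sqrt{\cdot}\,$ is concave, Jensen's inequality (equivalently, the arithmetic--quadratic mean inequality) gives $\frac1n\sum_{i=1}^n\lambda_i^{1/2}\le\big(\frac1n\sum_{i=1}^n\lambda_i\big)^{1/2}=1$, so $\sum_{i=1}^n\lambda_i^{1/2}\le n$. Hence $\sum_{i=1}^n\lambda_i^{1/2}-n\le0$, the quantity inside the absolute value equals $n-\sum_{i=1}^n\lambda_i^{1/2}=\sum_{i=1}^n(1-\lambda_i^{1/2})\ge0$, and it remains only to bound this sum above by $\ep$.

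The key step is to extract a quadratic factor from each summand. Rationalizing gives $1-\lambda_i^{1/2}=\frac{1-\lambda_i}{1+\lambda_i^{1/2}}=\frac{u_i}{1+\lambda_i^{1/2}}$. Because $\sum_{i}u_i=0$, I may subtract the constant $\frac12$ inside the sum without changing its value, so that
$$\sum_{i=1}^n(1-\lambda_i^{1/2})=\sum_{i=1}^n\frac{u_i}{1+\lambda_i^{1/2}}=\sum_{i=1}^n u_i\Big(\frac{1}{1+\lambda_i^{1/2}}-\frac12\Big)=\sum_{i=1}^n\frac{u_i^2}{2\,(1+\lambda_i^{1/2})^2},$$
where the last equality uses $\frac{1}{1+\lambda_i^{1/2}}-\frac12=\frac{1-\lambda_i^{1/2}}{2(1+\lambda_i^{1/2})}$ together with $1-\lambda_i^{1/2}=u_i/(1+\lambda_i^{1/2})$ once more. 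Since each $\lambda_i>0$ we have $(1+\lambda_i^{1/2})^2>1$, whence the whole sum is at most $\frac12\sum_{i=1}^n u_i^2$.

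Finally I would estimate $\sum_i u_i^2$. Trivially $\sum_{i=1}^n u_i^2\le\big(\max_i|u_i|\big)\sum_{i=1}^n|u_i|=\ep\sum_{i=1}^n|u_i|$. The constraint $\sum_i u_i=0$ forces the positive and negative parts to balance, so $\sum_i|u_i|=2\sum_{u_i>0}u_i$; there are at most $n-1$ positive terms (if not all $u_i$ vanish, at least one must be negative), each bounded by $\ep$, so $\sum_i|u_i|\le2(n-1)\ep$ and therefore $\sum_i u_i^2\le2(n-1)\ep^2$. Combining the pieces, $0\le n-\sum_{i=1}^n\lambda_i^{1/2}\le(n-1)\ep^2$, and the hypothesis $\ep\le\frac{1}{2(n-1)}$ gives $(n-1)\ep\le\frac12\le1$, hence $(n-1)\ep^2\le\ep$, which is exactly (4.3).

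The only genuine obstacle is the order of magnitude of the defect. The naive estimate $1-\lambda_i^{1/2}\le1-\lambda_i$ yields only $\sum_i(1-\lambda_i^{1/2})\le(n-1)\ep$, which is of order $1$ under the stated range of $\ep$ and hence useless. The entire gain comes from invoking $\sum_i u_i=0$ to annihilate the linear term and expose the quadratic $u_i^2$; after that the argument is routine. One should also record the degenerate case $\ep=0$ (all $\lambda_i=1$), where (4.3) holds trivially and the ``at least one negative'' remark is vacuous.
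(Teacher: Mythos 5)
Your proof is correct, and it takes a genuinely different route from the paper's. The paper orders the $\lambda_i$, writes $\lambda_{\max}=1+\eta$ and $\lambda_{\min}=1-\zeta$, and bounds $\sum_i\lambda_i^{1/2}$ above and below by locating the extremal configuration of the remaining $\lambda_i$ on a box subject to $\sum_i\lambda_i=n$ (a small multivariable optimization whose verification is only asserted, not carried out), concluding $n-\zeta\le\sum_i\lambda_i^{1/2}\le n+\eta$ and hence (4.3). You instead substitute $u_i=1-\lambda_i$ and use the identity
$$\sum_{i=1}^n\bigl(1-\lambda_i^{1/2}\bigr)=\sum_{i=1}^n\frac{u_i^2}{2\bigl(1+\lambda_i^{1/2}\bigr)^2},$$
valid precisely because $\sum_i u_i=0$ kills the linear term; this exhibits the defect as a manifestly nonnegative sum of quadratic terms and reduces everything to the elementary counting bound $\sum_i|u_i|\le 2(n-1)\ep$. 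Your argument is fully rigorous as written, avoids the optimization step entirely, and actually proves the stronger one-sided estimate $0\le n-\sum_i\lambda_i^{1/2}\le(n-1)\ep^2\le\ep/2$, making visible that the defect is quadratic in $\ep$ rather than merely linear; what the paper's route buys in exchange is only that it needs no algebraic manipulation beyond concavity of the square root. Your sign determination via Jensen, the balancing argument for $\sum_i|u_i|$, and the treatment of the degenerate case $\ep=0$ are all correct.
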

\begin{proof}
 We may assume
$\lambda_1=\min\big(E\big)$ and  $\lambda_n=\max\big(E\big).$

If $\lam_1=1$ or $\lam_n=1$, then for all $i,\lambda_i=1$, there is
nothing to do. If $\lam_n=1+\eta$, then $0<\eta\le\ep$. Since the
function
$$\sum_{i=1}^{n-2}\lambda_i^{1/2}+\big(n-1-\eta-\sum_{i=1}^{n-2}\lambda_i\big)^{1/2}$$
on
$$[1-\frac{1}{2(n-1)},1+\frac{1}{2(n-1)}]\times\cdots\times[1-\frac{1}{2(n-1)},1+\frac{1}{2(n-1)}]\subset \mathbb R^{n-2}$$
obtains its maximum value only when
$\lambda_i=1-\frac{\eta}{n-1},i=1,\cdots,n-2$,
 consequently,
$$
\sum_{i=1}^n\lambda_i^{1/2}
 \le(1+\eta)^{1/2}+\sum_{i=1}^{n-1}\Big(1-\frac{\eta}{n-1}\Big)^{1/2}\le
 n+\eta.
$$
If $\lambda_1=1-\zeta$,  similar argument show
$$\sum_{i=1}^n\lam_i^{1/2}\ge n-\zeta.$$
Then (4.3) follows the assumption $\eta\le\ep$ and $\zeta\le\ep$.

\end{proof}

If  $\naaa$ satisfies condition,

$$
\ep=\max\{|\la\alpha_i,\alpha_j\ra|:i,j=1,\cdots,n\},\eqno(4.4)
$$
we will say $\naaa$ is  $\ep$-mutually orthogonal.

 In [3], Hu and Xue proved, if $\naaa$ is  $\ep$-mutually
 orthogonal, then there are mutually orthogonal
 $\{\beta_1,\cdots,\beta_n\}$ with $\|\alpha_i-\beta_i\|<6(n-1)\ep,i=1,\cdots,n$.
 Now, we have the following:
\begin{theorem}
Suppose $n$-tuple units $\naa$ in $H$   is $\ep$-mutually orthogonal
with $\ep<\frac{1}{2(n-1)}$, then there is an orthonormal basis
$\nee$ of $span\{\alpha_1,\cdots,\alpha_n\}$ such that
$$\sum_{i=1}^n\|\ep_i-\alpha_i\|^2<2(n-1)\ep.\eqno(4.5)
$$
\end{theorem}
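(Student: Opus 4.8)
The plan is to take for $\nee$ the canonical orthonormalization $K\naa$ produced in Theorem 2.1 and to read off its sum of squares from formula (2.5), then reduce everything to a scalar statement about the eigenvalues of the Gram matrix. Write $A=(\la\alpha_j,\alpha_i\ra)$, which is positive definite by Theorem 2.1(2). Since each $\alpha_i$ is a unit we have $\sum_{i=1}^n\|\alpha_i\|^2=n$ and every diagonal entry of $A$ equals $1$, so $tr(A)=n$. Hence (2.5) collapses to
$$\sum_{i=1}^n\|\ep_i-\alpha_i\|^2=2n-2\,tr\big(A^{1/2}\big)=2\Big(n-\sum_{i=1}^n\lam_i^{1/2}\Big),$$
where $\lam_1,\dots,\lam_n>0$ denote the eigenvalues of $A$ and satisfy $\sum_i\lam_i=tr(A)=n$. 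Thus the whole theorem reduces to the inequality $n-\sum_i\lam_i^{1/2}\le(n-1)\ep$, which is precisely the shape handled by Lemma 4.3.

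To apply Lemma 4.3 I must control how far the $\lam_i$ lie from $1$. Here I would decompose $A=I_n+N$, where $N$ is the Hermitian matrix carrying the off-diagonal inner products $\la\alpha_j,\alpha_i\ra$ ($i\neq j$), each of modulus at most $\ep$ by (4.4), and whose diagonal vanishes because the $\alpha_i$ are units. The eigenvalues of $N$ are exactly $\lam_i-1$, so $\max_i|1-\lam_i|=\|N\|$, the spectral radius; by Gershgorin's disc theorem this is bounded by the largest absolute off-diagonal row sum, giving $\|N\|\le(n-1)\ep$. Together with $\ep<\frac{1}{2(n-1)}$ this confines the set $\{\lam_i\}$ near $1$, and Lemma 4.3 then yields
$$\Big|\sum_{i=1}^n\lam_i^{1/2}-n\Big|\le\max_i|1-\lam_i|\le(n-1)\ep.$$

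Assembling the two pieces gives
$$\sum_{i=1}^n\|\ep_i-\alpha_i\|^2=2\Big(n-\sum_{i=1}^n\lam_i^{1/2}\Big)\le 2(n-1)\ep,$$
which is (4.5); the strict inequality follows from the strict hypothesis $\ep<\frac{1}{2(n-1)}$ together with the observation that equality throughout the chain would force $N=0$, i.e. $\naa$ already orthonormal, the degenerate case excluded by $\ep>0$. The reduction in the first paragraph and this final assembly are routine once Theorem 2.1 and Lemma 4.3 are in hand.

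The step I expect to be delicate is the middle one: passing from the \emph{entrywise} smallness of the Gram matrix supplied by $\ep$-mutual orthogonality to genuine \emph{spectral} control of $A$, and in particular verifying that $\max_i|1-\lam_i|$ actually lands in the interval on which Lemma 4.3 is valid. The factor $(n-1)$ incurred when estimating the spectral radius by a row sum is sharp (it is attained when all pairwise inner products coincide), so the real content, and the place to be careful about constants, is matching this spectral spread against the threshold $\frac{1}{2(n-1)}$ demanded by Lemma 4.3.
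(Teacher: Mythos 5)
Your proof is essentially the paper's own argument: take $\nee=K\naa$, read off the sum of squares from (2.5), localize the eigenvalues of the Gram matrix via Gershgorin to within $(n-1)\ep$ of $1$, and invoke Lemma 4.3 to bound $\big|tr(A^{1/2})-n\big|$. The delicate point you flag at the end --- that Lemma 4.3 formally requires $\max_i|1-\lam_i|\le\frac{1}{2(n-1)}$ while Gershgorin only delivers $\max_i|1-\lam_i|\le(n-1)\ep<\frac12$ --- is passed over in exactly the same way in the paper's proof, so you are in the same position as the author on that score.
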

\begin{proof} Let $T=\mmaa$ and assume
$\sigma(T)=\{\lambda_1,\cdots,\lambda_n\}$. Since
$\la\alpha_i,\alpha_i\ra=1$ and
$$\sum_{j\not=i}|\la\alpha_i,\alpha_j\ra|<(n-1)\ep,$$ by Gersgorin
Theorem ([2]Theorem 6.1.1), for all $i,|\lambda_i-1|\le(n-1)\ep$.
Meanwhile,
$$\sum_{i=1}^n\lambda_i=tr(T)=\sum_{i=1}^n\la\alpha_i,\alpha_i\ra=n,$$
then by the Lemma 4.1,
$$\Big|tr(T^{1/2})-n\Big|=\Big|\sum_{i=1}^n\lam_i^{1/2}-n\Big|\le(n-1)\ep.$$
Let $\nee=K\naa$ defined by (2.4), applying (2.5)
$$\sum_{i=1}^n\|\ep_i-\alpha_i\|^2=2\big|tr(T^{1/2})-n\big|=2\Big|\sum_{i=1}^n\lambda_i^{1/2}-n\Big|\le2(n-1)\ep.$$

\end{proof}

{\bf Acknowledgements} \quad The author  would like to thank
Professor Huaxin Lin for his helpful comments and  suggestions from
their  multiple discussions.

\enddocument

\end{document}